\theoremstyle{plain}
\newtheorem{theorem}{Theorem}[section]
\newtheorem{fact}[theorem]{Fact}
\newtheorem{corollary}[theorem]{Corollary}
\newtheorem{question}{Question}
\theoremstyle{definition}
\newtheorem{definition}{Definition}
\newtheorem*{remark*}{Remark}
\newtheorem*{question*}{Question}
\DeclareMathOperator{\baire}{\omega^\omega}
\DeclareMathOperator{\seqN}{\omega^{<\omega}}
\DeclareMathOperator{\seq2}{2^{<\omega}}
\DeclareMathOperator{\analytic}{\mathbf{\Sigma}_1^1}
\DeclareMathOperator{\coanalytic}{\mathbf{\Pi}_1^1}
\DeclareMathOperator{\RLleq}{\leq_{RL}}
\DeclareMathOperator{\RLless}{<_{RL}}
\DeclareMathOperator{\RLmore}{>_{RL}}
\DeclareMathOperator{\lexq}{\leq_{lex}}
\DeclareMathOperator{\lex}{<_{lex}}
\DeclareMathOperator{\trees}{Tr_\omega}
\DeclareMathOperator{\illfounded}{IF_\omega}
\title{Sequences with increasing subsequence}
\author{Łukasz Mazurkiewicz}
\email{lukasz.mazurkiewicz@pwr.edu.pl}
\author{Szymon Żeberski}
\email{szymon.zeberski@pwr.edu.pl}
\thanks{The work has been partially financed by grant {\bf 8211204601, MPK: 9130730000} from the Faculty of Pure and Applied Mathematics, Wrocław University of Science and Technology.
	\\
	AMS Classification: Primary: 03E75, 28A05, 54H05; Secondary: 03E17
	\\
	Keywords: analytic set, Borel set, Polish space, analytic-complete set, Borel reduction, partial order, linear order}
\address{Łukasz Mazurkiewicz, Szymon Żeberski, Faculty of Pure and Applied Mathematics, Wrocław University of Science and Technology, 50-370 Wrocław, Poland}
\begin{document}
	\maketitle
	\begin{abstract}
    We study analytic and Borel subsets defined similarily to the old example of analytic complete set given by Luzin. Luzin's example, which is essentially a subset of the Baire space, is based on the natural partial order on naturals, i.e. division. It consists of sequences which contain increasing subsequence in given order. 
    
    We consider a variety of sets defined in a similar way. Some of them occurs to be Borel subsets of the Baire space, while others are analytic complete, hence not Borel.

    In particular, we show that an analogon of Luzin example  based on the natural linear order on rationals is analytic complete.  We also  characterise all countable linear orders having such property.
\end{abstract}

\section{Introduction}

We will use standard set theoretic notions (mostly following \cite{srivastava} and \cite{jech}). In particular, $\omega$ is the first infinite cardinal, i.e. the set of all natural numbers, $\mathbb{N}=\omega\setminus\{0\}$ is the set of all positive natural numbers, $\seqN$ and $\seq2$ are sets of finite sequences of elements of $\omega$ and $\{0,1\}$, respectively.
  For this part assume that $\mathcal{X}$ and $\mathcal{Y}$ are Polish spaces, i.e. separable completely metrizable topological spaces. Classical examples are the real line $\mathbb{R}$, the Baire space $\omega^\omega$, $\mathbb{N}^\omega$, the Cantor space $2^\omega$.
\begin{definition}
    We say that $A\subseteq \mathcal{X}$ is \textbf{$\analytic$-complete} if $A$ is analytic and for every Polish space $\mathcal{Y}$ and every analytic $B\subseteq \mathcal{Y}$ there is a Borel map $f: \mathcal{Y}\rightarrow \mathcal{X}$ such that $f^{-1}(A)=B$.
\end{definition}
\begin{definition}
    Let $A\subseteq \mathcal{X}$, $B\subseteq \mathcal{Y}$. We say that $B$ is \textbf{Borel reducible} to $A$ if there is a Borel map $f: \mathcal{Y}\rightarrow \mathcal{X}$ satisfying $f^{-1}(A)=B$.
\end{definition}
\begin{fact}
    \label{analityczny_z_analitycznego}
    If an analytic set $B$ is Borel reducible to $A$ and $B$ is $\analytic$-complete, then $A$ is $\analytic$-complete.
\end{fact}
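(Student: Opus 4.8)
The plan is to simply compose the two given reductions. Note first that to even speak of $A$ being $\analytic$-complete we must know $A$ is analytic; this is part of the intended hypothesis (and in each application below it is verified directly), so I assume it. What remains is to produce, for an arbitrary Polish space $\mathcal{Z}$ and an arbitrary analytic $C\subseteq\mathcal{Z}$, a Borel map $h\colon\mathcal{Z}\to\mathcal{X}$ with $h^{-1}(A)=C$.

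First I would invoke the $\analytic$-completeness of $B\subseteq\mathcal{Y}$ applied to the pair $(\mathcal{Z},C)$: this yields a Borel map $g\colon\mathcal{Z}\to\mathcal{Y}$ with $g^{-1}(B)=C$. Next I would use the hypothesis that $B$ is Borel reducible to $A$ to fix a Borel map $f\colon\mathcal{Y}\to\mathcal{X}$ with $f^{-1}(A)=B$. Then I would set $h=f\circ g\colon\mathcal{Z}\to\mathcal{X}$ and check that $h$ works: $h$ is Borel because a composition of Borel maps is Borel (for Borel $U\subseteq\mathcal{X}$, the set $f^{-1}(U)$ is Borel in $\mathcal{Y}$, hence $g^{-1}(f^{-1}(U))$ is Borel in $\mathcal{Z}$), and $h^{-1}(A)=g^{-1}(f^{-1}(A))=g^{-1}(B)=C$.

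Since $\mathcal{Z}$ and $C$ were arbitrary analytic data, this shows $A$ is $\analytic$-complete. There is no genuine obstacle in this argument; the only points worth stating with care are that compositions of Borel maps are again Borel and that taking preimages commutes with composition, both of which are routine.
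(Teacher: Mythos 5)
Your proof is correct and is exactly the standard composition argument that the paper leaves implicit (the Fact is stated without proof). You were also right to flag that analyticity of $A$ must be taken as part of the hypothesis, since Borel reducibility of $B$ to $A$ does not by itself make $A$ analytic; in every application in the paper this is verified separately.
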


Note that existence of analytic non-Borel sets and closure of Borel sets under Borel maps implies that all $\analytic$-complete sets are not Borel. Moreover, in order to apply \Cref{analityczny_z_analitycznego}, we need an example of $\analytic$-complete set. Such an example can be found among trees over $\omega$.
\begin{definition}
    A set $T\subseteq\seqN$ is a \textbf{tree over $\omega$} if
    $$\left(\forall \sigma\in\seqN\right)\left(\forall \tau\in\seqN\right)\left(\sigma\in T\land \tau\subseteq\sigma\implies\tau\in T\right).$$
    Set of all trees over $\omega$ will be denoted by $\trees$. A \textbf{body} of a tree $T$ is a set
    $$[T]=\left\{\sigma\in\baire: \left(\forall n\in\omega\right)\left(\sigma\upharpoonright n\in T\right)\right\}.$$
\end{definition}

Using above definition set $\trees$ can be seen as a $G_\delta$ subset of $P(\seqN)$. Therefore, $\trees$ is a Polish space. By $\illfounded$ let us denote the collection of all ill-founded trees over $\omega$, i.e. all trees with non-empty body. It occurs that $\illfounded$ is an example of $\analytic$-complete set we were looking for (see e.g. \cite[Example 4.2.1]{srivastava}).

Given an analytic set, proving its analytic completeness is a fundamental way of showing that it is not Borel. As shown in \cite{languages}, $\analytic$-complete (or, in this case, rather $\coanalytic$-complete) sets can be used in not necessarily set theoretic context. In the paper, authors investigate properties of regular languages of thin trees. In particular, they are interested in descriptive properties of such languages. One of their result is that regular language, which does not fulfil some definability condition (so called not WMSO-definable language), is $\coanalytic$-complete.

Naturally, $\analytic$-complete sets can be useful in more set theory-related research. Like in \cite{izomorfizmy_c0}, where class of all Banach spaces isomorphic to $c_0$ is considered. The main result of the work states that this class is a complete analytic set (with respect to Effros Borel structure), so it can not be Borel.

In \cite{colorings}, a class of coloring problems induced by actions of countable group on Polish spaces is studied. It is shown, that the set of such coloring problems, which additionally have Baire measurable solution for a particular free action $\alpha$, is $\analytic$-complete (when $\alpha$ is not trivial).

In this paper we would like to examine descriptive complexity of sequences with increasing subsequence, seen as a subset of $\baire$ (or other space homeomorphic to it). The motivation comes from classical example of Lusin (which can be found in \cite[27.2]{kechris}):

\begin{theorem}[Lusin]
	\label{lusin}
	Let $\mid$ be a division of positive natural numbers $\mathbb{N}$. Set
	$$L=\{y\in \mathbb{N}^\omega: (\exists k_0<k_1<\ldots)(\forall i\in\omega)(y(k_i)\mid y(k_{i+1}))\}.$$
	$L$ is a $\analytic$-complete subset of $\mathbb{N}^\omega$. 
\end{theorem}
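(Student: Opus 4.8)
The plan is to combine two observations through \Cref{analityczny_z_analitycznego}: that $L$ is $\analytic$, and that $\illfounded$ is Borel reducible to $L$. Since $\illfounded$ is known to be $\analytic$-complete, this yields the theorem. Analyticity of $L$ is immediate: $y\in L$ iff there is a strictly increasing $z\in\baire$ with $y(z(i))\mid y(z(i+1))$ for all $i$, and the set of pairs $(y,z)\in\mathbb{N}^\omega\times\baire$ with this property is closed, so $L$ is its projection onto the first coordinate.

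For the reduction I would build a continuous $f\colon\trees\to\mathbb{N}^\omega$. Fix an enumeration $\langle\sigma_m:m\in\omega\rangle$ of $\seqN$ that is \emph{monotone}, meaning $\sigma_i\subsetneq\sigma_j$ implies $i<j$ (e.g.\ list by length, breaking ties lexicographically). Split the primes into two infinite blocks, the \emph{code primes} and the \emph{junk primes} $\langle r_m:m\in\omega\rangle$, and fix a bijection $\langle\cdot,\cdot\rangle\colon\omega^2\to\omega$. To $s=(a_0,\dots,a_{n-1})\in\seqN$ assign the square-free code $c(s)$ equal to the product of the code primes indexed by $\langle 0,a_0\rangle,\dots,\langle n-1,a_{n-1}\rangle$ (so $c(\emptyset)=1$). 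The crucial arithmetical fact is that $c$ is injective and $c(s)\mid c(t)\iff s\subseteq t$: each prime appears to the first power and its index records both a coordinate and the value there, so divisibility forces agreement on $\{0,\dots,n-1\}$. Now define
$$f(T)(m)=\begin{cases} c(\sigma_m), & \sigma_m\in T,\\ r_m, & \sigma_m\notin T.\end{cases}$$
Because ``$\sigma_m\in T$'' is clopen in $\trees$, $f$ is continuous, hence Borel.

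It then remains to check $f^{-1}(L)=\illfounded$. If $T$ is ill-founded, fix $x\in[T]$ and let $m_n$ be the index with $\sigma_{m_n}=x\restriction n$; monotonicity makes $\langle m_n\rangle$ strictly increasing, each $\sigma_{m_n}\in T$, and $c(x\restriction n)\mid c(x\restriction(n+1))$, so $f(T)\in L$. Conversely, if $f(T)\in L$ via $k_0<k_1<\cdots$, then no $f(T)(k_i)$ can be a junk prime $r$, since that would force $r\mid f(T)(k_{i+1})$, impossible because every value of $f(T)$ is either a product of code primes or a junk prime $r_{k_{i+1}}\neq r$ (the indices being distinct). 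Hence each $\sigma_{k_i}\in T$ and $c(\sigma_{k_i})\mid c(\sigma_{k_{i+1}})$, so $\sigma_{k_i}\subseteq\sigma_{k_{i+1}}$; distinctness of the $k_i$ together with injectivity of the enumeration makes these inclusions proper, and $\bigcup_i\sigma_{k_i}$ is an infinite branch of $T$.

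The main obstacle is designing the code $c$ so that divisibility mirrors end-extension \emph{exactly}; the naive attempt $c(s)=\prod_i p_i^{s(i)+1}$ only captures coordinatewise domination, which admits infinite ascending chains among sequences of bounded length, and would send some well-founded trees into $L$. Folding the coordinate into the prime index repairs this. The second delicate point, handled by the code/junk split of primes, is making sure that positions coding nodes outside $T$ cannot participate in an infinite divisibility chain. (The value $c(\emptyset)=1$ divides everything, but $\emptyset$ belongs to every tree, so a chain through its position simply continues along a genuine branch; alternatively one may just assign $\emptyset$ a junk value.)
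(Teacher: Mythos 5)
Your proof is correct. Note that the paper itself does not prove \Cref{lusin} --- it only cites \cite[27.2]{kechris} --- so there is no in-text argument to compare against; but your reduction is exactly the template the authors use for \Cref{ciagi_nad_omega,RLorder_theorem} (monotone enumeration of $\seqN$, node value if $\sigma_m\in T$, a ``garbage'' value otherwise, and the observation that an infinite increasing chain in the image can contain at most finitely many garbage terms and hence yields a branch), composed with the one genuinely arithmetic ingredient: a code $c$ with $c(s)\mid c(t)\iff s\subseteq t$. Your square-free coding via primes indexed by pairs $\langle i,s(i)\rangle$ does achieve this exact equivalence (unique factorization plus injectivity of the pairing gives both injectivity of $c$ and the ``only if'' direction), and your diagnosis that the naive $\prod_i p_i^{s(i)+1}$ only encodes coordinatewise domination is the right one. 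The junk-prime bookkeeping and the remark about $c(\emptyset)=1$ are both handled correctly: since every $k_i$ has a successor in the chain, no term can be a junk prime, and the single position coding $\emptyset$ cannot spoil the argument. One could phrase the whole thing as an application of \Cref{generowanie} together with \Cref{ciagi_nad_omega} and \Cref{podzbior_analityczny}, using $c$ as the map $\varphi$, but your direct reduction from $\illfounded$ is equally valid.
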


We want to study the descriptive complexity of sets defined in a similar fashion. Assume that $X$ is countable set and $R$ is a relation on $X$. Define
$$L_{(X,R)}=\{y\in X^\omega: (\exists k_0<k_1<\ldots)(\forall i\in\omega)(y(k_i) R y(k_{i+1}))\}.$$



In the next section we will provide some basic facts and discuss the complexity of $L_{(X,R)}$ for various examples of $(X,R)$. 
We will focus mainly on the case of posets, i.e sets equipped with a relation which is reflexive, symmetric and transitive. 
Later we will consider linear orders and give a characterization of those for which the set $L_{(X,R)}$ is $\analytic$-complete.
	\section{Basic examples}\label{basics}

First we shall observe that projective class of $L_{(X,R)}$ can not exceed $\analytic$.
\begin{fact}
	Assume that $R\subseteq X\times X$ and $|R|\le \aleph_0$. Then the set $L_{(X,R)}$ is analytic.
\end{fact}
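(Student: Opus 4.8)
The plan is to exhibit $L_{(X,R)}$ as the projection onto one coordinate of a \emph{closed} subset of a product of two Polish spaces, and then to invoke the classical characterisation of analytic sets as exactly such projections (see e.g.\ \cite{srivastava}).

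First I would fix the ambient spaces. Since $X$ is countable, $X^\omega$ is a Polish space (homeomorphic to $\baire$ when $X$ is infinite, and to a compact metrizable space when $X$ is finite; in every case it is separable and completely metrizable). Put
\[
I=\{k\in\baire:\ k(0)<k(1)<k(2)<\cdots\}.
\]
This is a closed subset of $\baire$, being $\bigcap_{n\in\omega}\{k:\ k(n)<k(n+1)\}$, an intersection of clopen sets; hence $I$ is itself Polish, and its points are precisely the strictly increasing sequences of natural numbers appearing in the definition of $L_{(X,R)}$.

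Next I would introduce the ``witness set''
\[
P=\bigl\{(y,k)\in X^\omega\times I:\ (\forall i\in\omega)\ \bigl(y(k(i))\,R\,y(k(i+1))\bigr)\bigr\}
\]
and check that it is closed in $X^\omega\times I$. This is the only point requiring (mild) care, and it is routine: writing $P=\bigcap_{i\in\omega}P_i$ with $P_i=\{(y,k):\ (y(k(i)),y(k(i+1)))\in R\}$, each $P_i$ is clopen, since the $i$-th constraint depends only on the coordinates $k(i),k(i+1)$ of $k$ together with the two coordinates of $y$ they name, and $X$ carries the discrete topology. Thus $P$, a countable intersection of clopen sets, is closed (in particular Borel). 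Note that the hypothesis $|R|\le\aleph_0$ enters only through the countability of $X$, which is what makes $X^\omega$ and $I$ Polish in the first place.

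Finally, directly from the definition,
\[
L_{(X,R)}=\{y\in X^\omega:\ (\exists k\in I)\ (y,k)\in P\}=\pi_{X^\omega}(P),
\]
the image of the Borel (indeed closed) set $P\subseteq X^\omega\times I$ under the coordinate projection. Since a projection of a Borel subset of a product of Polish spaces onto one factor is analytic, $L_{(X,R)}$ is analytic, as claimed. I do not expect a genuine obstacle: the substantive content is simply the observation that, once the increasing sequence is made an explicit parameter ranging over the Polish space $I$, the matrix of the defining formula becomes a closed condition, so that the leading existential quantifier contributes exactly one application of projection and cannot push the set beyond $\analytic$.
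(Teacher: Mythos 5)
Your proof is correct and follows essentially the same route as the paper: both exhibit $L_{(X,R)}$ as the projection of a Borel "witness" set in a product of Polish spaces, the only cosmetic difference being that you restrict the second factor to the closed set $I$ of increasing sequences (making the witness set closed rather than merely Borel), whereas the paper keeps all of $\baire$ and absorbs the condition $k_i<k_{i+1}$ into the Borel set itself.
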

\begin{proof}
	Let us define
	$$B_{(X,R)}=\{(k,y)\in\baire\times X^\omega: (\forall i\in\omega)(k_i<k_{i+1}\land y(k_i) R y(k_{i+1}))\}.$$
	 Notice that  $B_{(X,R)}$ is Borel. Indeed,
	\begin{align*}
		B_{(X,R)}&=\bigcap_{i\in\omega}\{(k,y)\in\baire\times X^\omega: k_i<k_{i+1}\land y(k_i) R y(k_{i+1})\}\\
		&=\bigcap_{i\in\omega}\bigcup_{a\in\omega}\bigcup_{b>a}\big(\{k\in\baire: k_i=a, k_{i+1}=b\}\times\{y\in X^\omega: y(k_i)R y(k_{i+1})\}\big)\\
		&=\bigcap_{i\in\omega}\bigcup_{a\in\omega}\bigcup_{b>a}\left[\{k\in\baire: k_i=a, k_{i+1}=b\}\times\left(\bigcup_{(y_1, y_2)\in R}\{y\in X^\omega: y(a)=y_1, y(b)=y_2\}\right)\right]
	\end{align*}
	and $R$ is countable. 
	Clearly, $L_{(X,R)}=\pi_{X^\omega}[B_{(X,R)}]$ is a projection of a Borel set. So $L_{(X,R)}$ is analytic.
\end{proof}
In case when $X$ is finite, every sequence of elements of $X$ contains a constant subsequence. This observation gives us following:
\begin{fact}
	If $X$ is finite and $R$ is a reflexive relation on $X$, then $L_{(X,R)}=X^\omega$.
\end{fact}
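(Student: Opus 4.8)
The plan is a one-line application of the pigeonhole principle. Since $L_{(X,R)}\subseteq X^\omega$ holds trivially by definition, the only thing to argue is the reverse inclusion $X^\omega\subseteq L_{(X,R)}$. So I would fix an arbitrary $y\in X^\omega$ and produce the required increasing sequence of indices.

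The key observation is that $y$ is a function from the infinite set $\omega$ into the finite set $X$, so by pigeonhole there must be some value $a\in X$ with $y^{-1}(\{a\})=\{n\in\omega: y(n)=a\}$ infinite. Any infinite subset of $\omega$ has order type $\omega$, so I can enumerate $y^{-1}(\{a\})$ in strictly increasing order as $k_0<k_1<k_2<\cdots$. Then $y(k_i)=y(k_{i+1})=a$ for every $i\in\omega$, and since $R$ is reflexive we have $a\mathrel{R}a$, hence $y(k_i)\mathrel{R}y(k_{i+1})$ for all $i$. This is exactly a witness showing $y\in L_{(X,R)}$, so we are done.

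There is essentially no obstacle here; the only point requiring a word of justification is that the infinite fiber $y^{-1}(\{a\})$ can be listed as a strictly increasing $\omega$-sequence, which is a standard fact about subsets of $\omega$. One could also phrase the argument as: the constant sequence with value $a$ is a subsequence of $y$, and constant sequences always lie in $L_{(X,R)}$ when $R$ is reflexive — but the index-enumeration version is the most direct. I would keep the proof to a couple of sentences.
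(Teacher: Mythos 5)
Your proof is correct and is exactly the argument the paper gives: the paper states this fact right after the observation that every sequence over a finite set contains a constant subsequence, and reflexivity of $R$ then makes that constant subsequence an $R$-increasing witness. Your write-up just spells out the pigeonhole step and the enumeration of the infinite fiber explicitly.
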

\begin{fact}
	\label{lusin_rownosc}
	For a countable set  $X$ define  $\Delta_X=\{(x,x): x\in X\}$. Then $L_{(X,\Delta_X)}$ is Borel.
\end{fact}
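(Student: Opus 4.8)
The plan is to reformulate membership in $L_{(X,\Delta_X)}$ as the condition that some value occurs infinitely often in the sequence. First I would observe that if $k_0<k_1<\cdots$ satisfies $y(k_i)=y(k_{i+1})$ for every $i\in\omega$, then by an immediate induction (using transitivity of equality) $y(k_i)=y(k_0)$ for all $i$. Hence a witnessing subsequence is necessarily constant, equal to some fixed $x\in X$ occurring at infinitely many coordinates of $y$. Conversely, if some $x\in X$ satisfies $y(n)=x$ for infinitely many $n$, then enumerating those coordinates in increasing order produces a sequence $k_0<k_1<\cdots$ with $y(k_i)=x=y(k_{i+1})$. Therefore
$$L_{(X,\Delta_X)}=\bigcup_{x\in X}\Big\{y\in X^\omega:\{n\in\omega:y(n)=x\}\text{ is infinite}\Big\}.$$

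Next I would argue that each set on the right-hand side is Borel. For fixed $x\in X$ and $n\in\omega$ the set $\{y\in X^\omega:y(n)=x\}$ is clopen in $X^\omega$, and
$$\big\{y\in X^\omega:\{n:y(n)=x\}\text{ is infinite}\big\}=\bigcap_{m\in\omega}\bigcup_{n\ge m}\{y\in X^\omega:y(n)=x\}$$
is thus a $G_\delta$ set. Since $X$ is countable, $L_{(X,\Delta_X)}$ is a countable union of $G_\delta$ sets, hence Borel.

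There is essentially no obstacle here: the only point that requires a moment's thought is the first reformulation, namely that ``$y$ has an increasing subsequence all of whose consecutive terms are $\Delta_X$-related'' is equivalent to ``$y$ has a constant infinite subsequence,'' and this follows at once from the transitivity of equality. Everything after that is the routine verification that ``some value appears infinitely often'' defines a Borel (indeed $G_{\delta\sigma}$) subset of $X^\omega$.
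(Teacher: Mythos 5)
Your proposal is correct and follows exactly the paper's argument: rewrite membership as ``some $x\in X$ occurs at infinitely many coordinates,'' observe each such set is $G_\delta$, and conclude $L_{(X,\Delta_X)}$ is $G_{\delta\sigma}$, hence Borel. No differences worth noting.
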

\begin{proof}
	\begin{align*}
		L_{(X,\Delta_X)}&=\{y\in X^\omega: (\exists k_0<k_1<\ldots)(\forall i\in\omega)(y(k_i)= y(k_{i+1}))\}\\
		&=\{y\in X^\omega: (\exists x\in X)(\exists k_0<k_1<\ldots)(\forall i\in\omega)(y(k_i)=x)\}\\
		&=\{y\in X^\omega: (\exists x\in X)(\forall n\in\omega)(\exists k>n)(y(k)=x)\},
	\end{align*}
	what clearly gives us that $L_{(X,\Delta_X)}$ is $G_{\delta\sigma}$.
\end{proof}

\begin{question}
    What is the precise complexity of $L_{(X,\Delta_X)}$? Is it not $F_{\sigma\delta}$?
\end{question}

Notice that for any poset $(X, \leq_X)$
above result shows that, in order to identify projection class of $L_{(X,\leq_X)}$, we can focus on analyzing strictly increasing sequences. 
$$L_{(X,\leq_X)}=L_{(X,\Delta_X)}\cup\{y\in X^\omega: (\exists k_0<k_1<\ldots)(\forall i\in\omega)(y(k_i)<_X y(k_{i+1}))\}.$$

Now we can move to classification of linear orders in this problem. Because in well orderings there are no infinite decreasing subsequences, below fact follows:
\begin{fact}
	Assume that $\leq_X$ is a well ordering on (countable) $X$. Then $L_{(X,\le_X)}=X^\omega$.
\end{fact}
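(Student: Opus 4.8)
The plan is to prove the equality by showing directly that an arbitrary $y\in X^\omega$ belongs to $L_{(X,\le_X)}$ — in other words, that every $\omega$-sequence of elements of a well-ordered set admits an infinite $\le_X$-non-decreasing subsequence. By the decomposition $L_{(X,\le_X)}=L_{(X,\Delta_X)}\cup\{y: (\exists k_0<k_1<\ldots)(\forall i)(y(k_i)<_X y(k_{i+1}))\}$ noted just above the statement it would even be enough to land in one of the two pieces; but since $\le_X$ is reflexive, a constant subsequence already witnesses membership, so I will not separate the equality case and will simply build $k_0<k_1<\cdots$ with $y(k_i)\le_X y(k_{i+1})$ for all $i$.

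The tool is the classical ``peak'' argument, with well-foundedness of $\le_X$ doing the real work. Call $n\in\omega$ a \emph{peak} of $y$ if $y(m)<_X y(n)$ for every $m>n$. If $y$ had infinitely many peaks $n_0<n_1<\cdots$, then by definition $y(n_0)>_X y(n_1)>_X y(n_2)>_X\cdots$, an infinite strictly $\le_X$-decreasing sequence — impossible, since $\le_X$ is a well ordering. Hence the set of peaks of $y$ is finite; choose $N\in\omega$ strictly larger than all of them. For every $n\ge N$ the index $n$ is not a peak, so (using linearity of $\le_X$) there is $m>n$ with $y(n)\le_X y(m)$. Now set $k_0=N$ and, given $k_i\ge N$, pick $k_{i+1}>k_i$ with $y(k_i)\le_X y(k_{i+1})$, which is possible because $k_i\ge N$ is not a peak; then $k_{i+1}>k_i\ge N$, so the recursion continues. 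The resulting $k_0<k_1<\cdots$ shows $y\in L_{(X,\le_X)}$, and since $y$ was arbitrary, $L_{(X,\le_X)}=X^\omega$.

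There is essentially no obstacle here; the only point that needs a moment's care is why one cannot simply pick $k_{i+1}$ greedily as any later index with a $\le_X$-larger value — such an index need not exist after a bad $k_i$ — which is exactly the role of the peak notion: it isolates a tail (indices $\ge N$) from which every step to a later, $\le_X$-no-smaller value is guaranteed. (One could equally argue through the non-decreasing sequence of tail-minima $m_n=\min_{\le_X}\{y(k):k\ge n\}$, but the peak formulation makes the single use of well-foundedness most transparent, and nothing about countability of $X$ is needed.)
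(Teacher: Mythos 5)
Your argument is correct and is exactly the standard way of cashing out the paper's one-line justification (the paper offers no written proof beyond the remark that well orderings admit no infinite decreasing subsequences, which is precisely what your peak argument uses). The construction of the non-decreasing subsequence past the last peak, including the appeal to linearity when negating the peak condition, is sound, and you are right that countability of $X$ plays no role.
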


Now let us consider the set of integers equipped with a standard order $\le$. It is probably one of the simplest linear orders which is not a well ordering.
\begin{fact}
	 The set $L_{(\mathbb{Z},<)}$ is $G_{\delta}$ and not $F_\sigma$.
\end{fact}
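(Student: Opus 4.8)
The plan is to replace the subsequence condition by a transparent pointwise condition. I claim that a sequence $y\in\mathbb{Z}^\omega$ belongs to $L_{(\mathbb{Z},<)}$ if and only if $y$ is unbounded above, i.e.\ $\sup_{n\in\omega}y(n)=+\infty$. Indeed, if $y(k_0)<y(k_1)<\cdots$ is a strictly $<$-increasing subsequence then $y(k_i)\ge y(k_0)+i$, so $y$ is unbounded above; conversely, if $y$ is unbounded above then for each $v\in\mathbb{Z}$ there are infinitely many $n$ with $y(n)>v$ (otherwise, after deleting those finitely many indices, $y$ would be bounded above), and a routine greedy construction then produces indices $k_0<k_1<\cdots$ with $y(k_0)<y(k_1)<\cdots$. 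Consequently
\[
L_{(\mathbb{Z},<)}=\bigcap_{m\in\mathbb{Z}}\ \bigcup_{n\in\omega}\ \{y\in\mathbb{Z}^\omega:y(n)>m\},
\]
and since each set $\{y:y(n)>m\}$ is clopen, this displays $L_{(\mathbb{Z},<)}$ as a $G_\delta$ set.

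For the remaining assertion I would work with the complement $C=\mathbb{Z}^\omega\setminus L_{(\mathbb{Z},<)}$, the set of sequences bounded above, and show $C$ is not $G_\delta$ by a Baire category argument; this yields that $L_{(\mathbb{Z},<)}$ is not $F_\sigma$. Write $C=\bigcup_{m\in\mathbb{Z}}F_m$ with $F_m=\{y\in\mathbb{Z}^\omega:\forall n\ y(n)\le m\}$. Each $F_m$ is closed (a product of closed subsets of $\mathbb{Z}$) and has empty interior, because any basic clopen cylinder admits an extension taking the value $m+1$; hence each $F_m$ is nowhere dense and $C$ is meager. On the other hand $C$ is dense, since any finite string may be completed by a constant tail. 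If $L_{(\mathbb{Z},<)}$ were $F_\sigma$, then $C$ would be $G_\delta$, hence a dense $G_\delta$ in the Polish space $\mathbb{Z}^\omega$, hence comeager --- contradicting the fact that a nonempty Baire space has no subset that is simultaneously meager and comeager.

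I do not expect a genuine obstacle. The two points that need a little care are the equivalence in the first paragraph (specifically the observation that a strictly increasing sequence of integers must tend to $+\infty$, which fails for, say, the rationals and is exactly what makes this linear order tame) and the clean invocation of Baire's theorem in the last step. If one prefers to avoid the category argument inside $\mathbb{Z}^\omega$, an equivalent route is to note that the continuous inclusion $\omega^\omega\hookrightarrow\mathbb{Z}^\omega$ pulls $C$ back to the set of bounded sequences of $\omega^\omega$, which is $\sigma$-compact (hence meager) and dense, hence not $G_\delta$; therefore $C$ is not $G_\delta$ either.
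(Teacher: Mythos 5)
Your proof is correct and follows essentially the same route as the paper: the same identification of $L_{(\mathbb{Z},<)}$ with the sequences unbounded above giving the $G_\delta$ presentation, and the same Baire category argument (the complement is meager yet dense, so it cannot also be a $G_\delta$) for the failure of $F_\sigma$. Your write-up is just a more explicit version of the paper's, spelling out the decomposition of the complement into the closed nowhere dense sets $F_m$.
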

\begin{proof}
	Observe that every strictly increasing sequence of integers is unbounded. So we can write
	\begin{align*}
		&L_{(\mathbb{Z}, <)}=\{y\in \mathbb{Z}^\omega: (\exists k_0<k_1<\ldots)(\forall i\in\omega)(y(k_i)< y(k_{i+1}))\}\\
		&=\{y\in\mathbb{Z}^\omega: (\forall n\in\mathbb{Z})(\exists k\in\omega)(y(k)>n)\}\\
		&=\bigcap_{n\in\mathbb{Z}}\bigcup_{k\in\omega}\{y\in\mathbb{Z}^\omega: y(k)>n\}\\
		&=\bigcap_{n\in\mathbb{Z}}\bigcup_{k\in\omega}\bigcup_{m>n}\{y\in\mathbb{Z}^\omega: y(k)=m\},
	\end{align*}
	which is clearly a $G_\delta$ set as $\{y\in\mathbb{Z}^\omega: y(k)=m\}$ is clopen.

    Now note that both $L_{(\mathbb{Z}, <)}$ and $L_{(\mathbb{Z}, <)}^c$ have empty interiors (since they cannot include any base open set). Therefore $L_{(\mathbb{Z}, <)}^c$ is meager (as an $F_\sigma$ set without interior). If $L_{(\mathbb{Z}, <)}$ is an $F_\sigma$ set, it is also meager contradicting Baire category theorem. 
\end{proof}
From the observation made after \Cref{lusin_rownosc} and above fact we obtain following corollary:
\begin{corollary}
	 The set $L_{(\mathbb{Z},\leq)}$ is Borel.
\end{corollary}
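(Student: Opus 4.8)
The corollary follows almost immediately from the decomposition observed after \Cref{lusin_rownosc} together with the preceding fact, and my plan is simply to assemble these two ingredients. Recall that for any poset (in particular any linearly ordered set) $(X,\leq_X)$ we have
$$L_{(X,\leq_X)}=L_{(X,\Delta_X)}\cup\{y\in X^\omega:(\exists k_0<k_1<\ldots)(\forall i\in\omega)(y(k_i)<_X y(k_{i+1}))\},$$
and in the case $X=\mathbb{Z}$ with the standard strict order the second set on the right is exactly $L_{(\mathbb{Z},<)}$, which the previous fact shows to be $G_\delta$ (hence Borel). By \Cref{lusin_rownosc}, $L_{(\mathbb{Z},\Delta_{\mathbb{Z}})}$ is $G_{\delta\sigma}$, hence also Borel.

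Thus $L_{(\mathbb{Z},\leq)}$ is written as a union of two Borel sets, and since the Borel $\sigma$-algebra is closed under finite (indeed countable) unions, $L_{(\mathbb{Z},\leq)}$ is Borel. One could even be slightly more precise: a $G_{\delta\sigma}$ set union a $G_\delta$ set is $G_{\delta\sigma}$, so $L_{(\mathbb{Z},\leq)}$ is in fact $G_{\delta\sigma}$, though the statement only claims Borelness.

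I do not anticipate any real obstacle here, since the heavy lifting (the explicit unbounded-tail characterisation of $L_{(\mathbb{Z},<)}$ and the countable-supremum characterisation of $L_{(X,\Delta_X)}$) was already done in the two facts we are allowed to cite. The only point requiring a line of justification is that the strictly-increasing part of $L_{(\mathbb{Z},\leq)}$ really does coincide with the set $L_{(\mathbb{Z},<)}$ appearing in the previous fact — but this is immediate from the definitions, since $y(k_i)<_{\mathbb{Z}} y(k_{i+1})$ in the decomposition is literally the defining condition of $L_{(\mathbb{Z},<)}$. After noting that, the proof is one sentence invoking closure of Borel sets under finite unions.
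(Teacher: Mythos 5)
Your proof is correct and is precisely the argument the paper intends: the decomposition after \Cref{lusin_rownosc} splits $L_{(\mathbb{Z},\leq)}$ into $L_{(\mathbb{Z},\Delta_{\mathbb{Z}})}$ (which is $G_{\delta\sigma}$) and $L_{(\mathbb{Z},<)}$ (which is $G_\delta$), and the union of two Borel sets is Borel. The extra observation that the result is in fact $G_{\delta\sigma}$ is a harmless refinement.
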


	\section{Main results}

One of the tools in recognizing $\analytic$-complete sets among the sets of the form $L_{(X,R)}$ is the following observation.
\begin{theorem}
	\label{generowanie}
	Suppose $(X, \leq_X)$, $(Y, \leq_Y)$ are posets and $\varphi: X\rightarrow Y$ satisfies the following condition for every $(x_n)_{n\in\omega}\in X^\omega$:
	$$(x_n)_{n\in\omega} \text{ contains $\leq_X$-increasing subsequence} \Leftrightarrow (\varphi(x_n))_{n\in\omega} \text{ contains $\leq_Y$-increasing subsequence}.$$
 If $L_{(X,\leq_X)}$ is $\analytic$-complete, then $L_{(Y,\leq_Y)}$ is $\analytic$-complete too.
\end{theorem}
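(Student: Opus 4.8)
The plan is to exhibit $L_{(X,\leq_X)}$ as Borel (in fact continuously) reducible to $L_{(Y,\leq_Y)}$ by the obvious coordinatewise map, and then to quote \Cref{analityczny_z_analitycznego}. Concretely, I would define $\Phi\colon X^\omega\to Y^\omega$ by $\Phi\big((x_n)_{n\in\omega}\big)=(\varphi(x_n))_{n\in\omega}$. Since $X$ and $Y$ are countable we view them as discrete Polish spaces, so $X^\omega$ and $Y^\omega$ are Polish, and $\Phi$ is continuous: the preimage of a basic clopen set $\{z\in Y^\omega: z\upharpoonright n=s\}$ is the clopen set $\{x\in X^\omega:\varphi(x(i))=s(i)\text{ for all }i<n\}$. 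In particular $\Phi$ is Borel.

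Next I would check that the hypothesis on $\varphi$ is \emph{literally} the statement $\Phi^{-1}\big(L_{(Y,\leq_Y)}\big)=L_{(X,\leq_X)}$. Indeed, for $y=(x_n)_{n\in\omega}\in X^\omega$ we have $y\in\Phi^{-1}\big(L_{(Y,\leq_Y)}\big)$ iff $\Phi(y)=(\varphi(x_n))_{n\in\omega}$ contains a $\leq_Y$-increasing subsequence, which by the assumed equivalence (applied to $(x_n)_{n\in\omega}$) holds iff $(x_n)_{n\in\omega}$ contains a $\leq_X$-increasing subsequence, i.e. iff $y\in L_{(X,\leq_X)}$. Hence $\Phi$ witnesses that $L_{(X,\leq_X)}$ is Borel reducible to $L_{(Y,\leq_Y)}$.

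Finally, since $Y$ is countable the relation $\leq_Y\subseteq Y\times Y$ is countable, so $L_{(Y,\leq_Y)}$ is analytic (see \Cref{basics}). As $L_{(X,\leq_X)}$ is assumed $\analytic$-complete and is Borel reducible to the analytic set $L_{(Y,\leq_Y)}$, \Cref{analityczny_z_analitycznego} gives that $L_{(Y,\leq_Y)}$ is $\analytic$-complete. There is no substantive obstacle in this argument; it is a routine transport-of-reduction. The only points demanding a little care are verifying that the coordinatewise map $\Phi$ is Borel (immediate from continuity) and recognizing that the biconditional in the hypothesis is exactly the equality $\Phi^{-1}(L_{(Y,\leq_Y)})=L_{(X,\leq_X)}$, after which \Cref{analityczny_z_analitycznego} does all the work.
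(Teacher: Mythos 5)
Your proposal is correct and follows essentially the same route as the paper: both define the coordinatewise map $x\mapsto(\varphi(x_n))_{n\in\omega}$, observe it is continuous hence Borel, and note that the hypothesis on $\varphi$ is precisely the statement that this map reduces $L_{(X,\leq_X)}$ to $L_{(Y,\leq_Y)}$. The only cosmetic difference is that you invoke \Cref{analityczny_z_analitycznego} (and explicitly check $L_{(Y,\leq_Y)}$ is analytic), whereas the paper unwinds the definition of $\analytic$-completeness and composes the two reductions by hand.
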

\begin{proof}
	Let $Z$ be a Polish space and $A\subseteq Z$ be an analytic set. There is a Borel map $f: Z\rightarrow X^\omega$ such that $f^{-1}[L_X]=A$. We need a Borel map $h:Z\rightarrow Y^\omega$ satisfying $h^{-1}[L_Y]=A$.
	
	Define $g: X^\omega\rightarrow Y^\omega$ with formula
	$$g(x)(n)=\varphi(x_n).$$
	Clearly, $g$ is continuous, so $h=g\circ f$ is Borel. For the thesis it is sufficient to show that $g^{-1}[L_Y]=L_X$.
	\begin{align*}
		x\in L_{(X,\leq_X)}&\Leftrightarrow x\text{ contains a $\leq_X$-increasing subsequence}\\
		&\Leftrightarrow g(x)\text{ contains a $\leq_Y$-increasing subsequence}\\
		&\Leftrightarrow g(x)\in L_{(Y,\leq_Y)}\Leftrightarrow x\in g^{-1}[L_{(Y,\leq_Y)}]
	\end{align*}
\end{proof}
\begin{corollary}
	\label{podzbior_analityczny}
	Assume that  $X\subseteq Y$, $S\subseteq Y\times Y$, $R=S\cap (X\times X)$ and $L_{(X, R)}$ is $\analytic$-complete. Then  $L_{(Y,S)}$ is $\analytic$-complete, too. 
\end{corollary}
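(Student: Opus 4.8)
The plan is to reprise the argument of \Cref{generowanie}, replacing $\varphi$ by the inclusion of $X$ into $Y$; the only extra ingredient, hidden in \Cref{generowanie} by the standing poset assumptions, is that one must also record that $L_{(Y,S)}$ is analytic. So the first step is the observation that $S\subseteq Y\times Y$ is countable (since $Y$ is), hence by the first Fact of \Cref{basics} the set $L_{(Y,S)}$ is analytic. By \Cref{analityczny_z_analitycznego} it then suffices to Borel-reduce the $\analytic$-complete set $L_{(X,R)}$ to $L_{(Y,S)}$.

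For the reduction I would take $g\colon X^\omega\to Y^\omega$ defined coordinatewise by $g(x)(n)=x(n)$, i.e. the map induced by $X\subseteq Y$. It is continuous (the preimage of a basic clopen set $\{z: z(n)=y\}$ equals $\{x: x(n)=y\}$, which is clopen in $X^\omega$ and empty when $y\notin X$), hence Borel, so it remains to verify $g^{-1}[L_{(Y,S)}]=L_{(X,R)}$. Fix $x\in X^\omega$ and indices $k_0<k_1<\cdots$. Since each pair $(x(k_i),x(k_{i+1}))$ lies in $X\times X$ and $R=S\cap(X\times X)$, the statements ``$x(k_i) R x(k_{i+1})$ for all $i$'' and ``$x(k_i) S x(k_{i+1})$ for all $i$'' are equivalent. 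Hence $x$ contains an $R$-increasing subsequence in $X$ if and only if $g(x)$ contains an $S$-increasing subsequence in $Y$, and the same index set witnesses both; this is precisely $g^{-1}[L_{(Y,S)}]=L_{(X,R)}$. Together with the first step and \Cref{analityczny_z_analitycznego}, this completes the proof.

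I do not anticipate any genuine difficulty: the corollary is a soft consequence of \Cref{analityczny_z_analitycznego} together with the remark that pairs from $X$ cannot distinguish $R$ from $S$. The two points worth a line of care are the bookkeeping that $L_{(Y,S)}$ is analytic (so that the conclusion is full $\analytic$-completeness) and the fact that no re-indexing of the witnessing increasing subsequence is needed when moving between $X^\omega$ and $Y^\omega$.
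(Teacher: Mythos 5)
Your proof is correct and is essentially the paper's own argument: the paper simply applies \Cref{generowanie} with $\varphi(x)=x$, which is exactly the reduction you spell out (your unfolding, including the remark that pairs from $X$ cannot distinguish $R$ from $S$, is just the verification of the hypothesis of \Cref{generowanie} for the inclusion map). Your extra care about $L_{(Y,S)}$ being analytic is a reasonable bit of bookkeeping that the paper leaves implicit.
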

\begin{proof}
    It is enough to take $\varphi(x)=x$ in \Cref{generowanie}.
\end{proof}

\begin{corollary}
	\label{izomorfizm_analityczny}
	Assume that $(X, \leq_X)$ and $(Y, \leq_Y)$ are isomorphic posets and $L_{(X, \leq_X)}$ is $\analytic$-complete. Then $L_{(Y, \leq_Y)}$ is $\analytic$-complete, too. 
\end{corollary}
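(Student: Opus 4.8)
The plan is to deduce \Cref{izomorfizm_analityczny} directly from \Cref{generowanie} by taking $\varphi$ to be the order isomorphism itself. Let $\varphi: X\to Y$ be a bijection witnessing that $(X,\leq_X)$ and $(Y,\leq_Y)$ are isomorphic posets, i.e. $x\leq_X x'\iff \varphi(x)\leq_Y\varphi(x')$ for all $x,x'\in X$. To invoke \Cref{generowanie}, I must verify the equivalence: for every $(x_n)_{n\in\omega}\in X^\omega$, the sequence $(x_n)_n$ contains a $\leq_X$-increasing subsequence if and only if $(\varphi(x_n))_n$ contains a $\leq_Y$-increasing subsequence.

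The key step is this verification, and it is essentially immediate from the definition of order isomorphism. If $(x_{k_i})_{i\in\omega}$ is a $\leq_X$-increasing subsequence of $(x_n)_n$, then applying $\varphi$ and using the forward implication of the isomorphism property gives that $(\varphi(x_{k_i}))_{i\in\omega}$ is a $\leq_Y$-increasing subsequence of $(\varphi(x_n))_n$, along the same index set $k_0<k_1<\cdots$. Conversely, if $(\varphi(x_{k_i}))_{i\in\omega}$ is $\leq_Y$-increasing, then since $\varphi$ reflects the order (this is where bijectivity/the backward implication is used), $(x_{k_i})_{i\in\omega}$ is $\leq_X$-increasing. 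Hence the required biconditional holds.

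With the hypothesis of \Cref{generowanie} satisfied and $L_{(X,\leq_X)}$ assumed $\analytic$-complete, \Cref{generowanie} yields that $L_{(Y,\leq_Y)}$ is $\analytic$-complete, which is the claim. There is no real obstacle here: the only thing to be slightly careful about is that one genuinely uses both directions of the isomorphism (preservation \emph{and} reflection of $\leq$) — for a mere order-preserving injection the backward implication of the equivalence could fail, so the proof does rely on $\varphi$ being an isomorphism, not just an embedding.

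\begin{proof}
	Let $\varphi: X\rightarrow Y$ be an isomorphism of the posets $(X,\leq_X)$ and $(Y,\leq_Y)$. Fix any $(x_n)_{n\in\omega}\in X^\omega$. If $k_0<k_1<\ldots$ witnesses that $(x_n)_{n\in\omega}$ contains a $\leq_X$-increasing subsequence, i.e. $x_{k_i}\leq_X x_{k_{i+1}}$ for all $i$, then $\varphi(x_{k_i})\leq_Y\varphi(x_{k_{i+1}})$ for all $i$, so $(\varphi(x_n))_{n\in\omega}$ contains a $\leq_Y$-increasing subsequence. Conversely, if $k_0<k_1<\ldots$ witnesses that $(\varphi(x_n))_{n\in\omega}$ contains a $\leq_Y$-increasing subsequence, then $\varphi(x_{k_i})\leq_Y\varphi(x_{k_{i+1}})$ for all $i$, and since $\varphi$ reflects the order, $x_{k_i}\leq_X x_{k_{i+1}}$ for all $i$, so $(x_n)_{n\in\omega}$ contains a $\leq_X$-increasing subsequence. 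Thus $\varphi$ satisfies the hypothesis of \Cref{generowanie}, and since $L_{(X,\leq_X)}$ is $\analytic$-complete, \Cref{generowanie} gives that $L_{(Y,\leq_Y)}$ is $\analytic$-complete.
\end{proof}
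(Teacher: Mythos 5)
Your proposal is correct and takes exactly the same route as the paper, which simply says to put the order isomorphism as $\varphi$ in \Cref{generowanie}; you merely spell out the (immediate) verification that an isomorphism preserves and reflects the existence of increasing subsequences along the same index set. No issues.
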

\begin{proof}
    To see this, put order isomorphism as $\varphi$ in \Cref{generowanie}.
\end{proof}

Let us now show an example of $\analytic$-complete set based on a space of finite sequences of naturals. 
\begin{theorem}
	\label{ciagi_nad_omega}
 The set $L_{(\seqN, \subseteq)}$ is $\analytic$-complete.
\end{theorem}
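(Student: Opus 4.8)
The plan is to Borel-reduce the set $\illfounded$ of ill-founded trees over $\omega$ — which the introduction recalls is $\analytic$-complete — to $L_{(\seqN,\subseteq)}$; since $\subseteq$ is a countable relation on the countable set $\seqN$, the set $L_{(\seqN,\subseteq)}$ is analytic, so by \Cref{analityczny_z_analitycznego} this suffices. The reduction will turn a tree $T$ into a single sequence listing codes of \emph{all} finite sequences, coding a finite sequence $\sigma$ by ``stretching'' it (replacing each entry $a$ by $2a$) when $\sigma\in T$, and replacing it by a ``dummy'' sequence when $\sigma\notin T$. The idea is that an infinite branch of $T$ reappears as a $\subseteq$-increasing subsequence sitting at increasing indices, while the dummies are rigged so that no $\subseteq$-increasing subsequence at increasing indices can pass through one, forcing any such subsequence to witness a branch of $T$.

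Concretely, I would first fix a bijection $e\colon\omega\to\seqN$ enumerating $\seqN$ in some order extending end-extension, i.e.\ $\sigma\subsetneq\tau\Rightarrow e^{-1}(\sigma)<e^{-1}(\tau)$; such an $e$ exists because $|\sigma|+\sum_{i<|\sigma|}\sigma(i)$ strictly increases along $\subsetneq$ and takes each value on only finitely many $\sigma$, so one can enumerate $\seqN$ block by block according to it. Writing $\overline{\langle a_0,\dots,a_{k-1}\rangle}=\langle 2a_0,\dots,2a_{k-1}\rangle$, an injection satisfying $\sigma\subseteq\tau\iff\overline\sigma\subseteq\overline\tau$, I define $f\colon\trees\to(\seqN)^\omega$ by
$$f(T)(n)=\begin{cases}\overline{e(n)}&\text{if }e(n)\in T,\\ \langle 2n+1\rangle&\text{if }e(n)\notin T.\end{cases}$$
For each fixed $n$ the set $\{T\in\trees:e(n)\in T\}$ is clopen, so $T\mapsto f(T)(n)$ is continuous; hence $f$ is continuous, in particular Borel.

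Next I would check $T\in\illfounded\iff f(T)\in L_{(\seqN,\subseteq)}$. For ``$\Rightarrow$'': take $x\in[T]$ and put $k_m=e^{-1}(x\upharpoonright m)$; since $x\upharpoonright m\subsetneq x\upharpoonright(m+1)$ we get $k_0<k_1<\cdots$, and, as $x\upharpoonright m\in T$, we have $f(T)(k_m)=\overline{x\upharpoonright m}$, so $f(T)(k_0)\subseteq f(T)(k_1)\subseteq\cdots$. For ``$\Leftarrow$'': suppose $f(T)(k_0)\subseteq f(T)(k_1)\subseteq\cdots$ with $k_0<k_1<\cdots$. No $f(T)(k_i)$ can be a dummy $\langle 2k_i+1\rangle$: otherwise $f(T)(k_{i+1})$ would end-extend $\langle 2k_i+1\rangle$, in particular begin with the odd number $2k_i+1$, which is impossible for a value of $f$ (a stretched tree node, if nonempty, begins with an even number, and the only dummy beginning with $2k_i+1$ is $\langle 2k_i+1\rangle$ itself, which occurs only at the index $k_i<k_{i+1}$). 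Hence each $f(T)(k_i)=\overline{e(k_i)}$ with $e(k_i)\in T$; the $\overline{e(k_i)}$ are pairwise distinct (distinct indices, $e$ and $\overline{\,\cdot\,}$ injective) and form a $\subseteq$-chain, hence a strict one, so the $e(k_i)$ form a strictly $\subseteq$-increasing chain in $T$, whose union is an element of $[T]$. Thus $f(T)\in L_{(\seqN,\subseteq)}$ forces $[T]\neq\emptyset$, and combined with ``$\Rightarrow$'' we get $f^{-1}[L_{(\seqN,\subseteq)}]=\illfounded$.

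The step that needs care — and the reason for the even/odd bookkeeping — is precisely the last one: because $\subseteq$ is \emph{reflexive}, if some value occurred infinitely often in $f(T)$, or two dummies were $\subseteq$-comparable, or a dummy could be end-extended within the range of $f(T)$, then $f(T)$ would land in $L_{(\seqN,\subseteq)}$ no matter what $T$ is. Reserving odd values for the length-one dummies $\langle 2n+1\rangle$ — which are pairwise distinct and, having odd first entry, $\subseteq$-incomparable with every stretched tree node — and even values for the tree nodes excludes all of this; the other ingredients (existence of $e$, continuity of $f$, and the fact that a strictly $\subseteq$-increasing chain of finite sequences in a tree has an infinite branch as its union) are routine.
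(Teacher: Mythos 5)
Your proof is correct and follows essentially the same strategy as the paper: reduce $\illfounded$ via an enumeration of $\seqN$ compatible with end-extension, sending nodes of $T$ to (a copy of) themselves and non-nodes to dummy sequences that cannot participate in a $\subseteq$-chain. The only difference is cosmetic: the paper uses dummies $1^n0$ and argues that at most one can occur in an increasing subsequence, while your even/odd encoding makes the dummies incomparable with everything in the range, excluding them outright.
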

\begin{proof}
   To prove that $L_{(\seqN, \subseteq)}$ is $\analytic$-complete we will construct a continuous function $f:\trees\rightarrow(\seqN)^\omega$ such that $f^{-1}[L_{(\seqN, \subseteq)}]=\illfounded$ 
   First, fix an enumeration $\{\sigma_n: n\in\omega\}$ of $\seqN$ satisfying the following condition
	$$\sigma_n\subseteq\sigma_m\Rightarrow n\leq m.$$
	Now we can define the function $f$:
	$$f(T)(n)=\left\{\begin{array}{ll}
		\sigma_n, & \sigma_n\in T \\
		1^n0, & \sigma_n\notin T
	\end{array}\right. .$$
	Clearly, if $T\in\illfounded$, then $f(T)$ contains $\subseteq$-increasing subsequence, hence $f(T)\in L_{(\seqN, \subseteq)}$. To prove the opposite implication, let $a\in L_{(\seqN, \subseteq)}$, $a_{i_0}\subseteq a_{i_1}\subseteq a_{i_2}\subseteq\ldots$, $i_0<i_1<i_2<\ldots$. Take any $T\in f^{-1}(a)$. Notice that at most one of $a_{i_0},a_{i_1},\ldots$ can be of the form $1^n0$ for some $n\in\omega$, so without loss of generality all of them are elements of $T$ and form a strictly increasing sequence. But such a sequence of elements of $T$ builds a branch in $T$, so $T\in\illfounded$.
\end{proof}
\begin{theorem}
 The set  $L_{(\seq2, \subseteq)}$ is $\analytic$-complete.
\end{theorem}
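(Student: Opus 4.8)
The plan is to deduce this from \Cref{ciagi_nad_omega} via \Cref{generowanie}. Since $(\seqN,\subseteq)$ and $(\seq2,\subseteq)$ are partial orders and $L_{(\seqN,\subseteq)}$ has just been shown to be $\analytic$-complete, it suffices to produce a map $\varphi\colon\seqN\to\seq2$ which both preserves and reflects the property of containing a $\subseteq$-increasing subsequence; then \Cref{generowanie} finishes the proof with a single invocation.

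The map I would use encodes a finite sequence of naturals as a block of binary digits separated by a fixed symbol. Concretely, for $\sigma=(a_0,a_1,\dots,a_{k-1})\in\seqN$ set
$$\varphi(\sigma)=1^{a_0}0\,1^{a_1}0\cdots 1^{a_{k-1}}0\in\seq2,$$
with $\varphi$ sending the empty sequence to the empty sequence. The key claim is the pointwise equivalence
$$\sigma\subseteq\tau\iff\varphi(\sigma)\subseteq\varphi(\tau)\qquad(\sigma,\tau\in\seqN).$$
The forward implication is immediate, since passing from $\sigma$ to an extension only appends further blocks to $\varphi(\sigma)$. For the converse one uses that in $\varphi(\tau)$ the symbol $0$ occurs \emph{exactly} at the ends of the blocks $1^{a_i}0$, while a nonempty $\varphi(\sigma)$ ends with a $0$; hence if $\varphi(\sigma)$ is an initial segment of $\varphi(\tau)$ it terminates at a block boundary of $\varphi(\tau)$, so it is the concatenation of the first $|\sigma|$ blocks of $\varphi(\tau)$. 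This forces $\sigma(i)=\tau(i)$ for all $i<|\sigma|$, i.e. $\sigma\subseteq\tau$. The case $\sigma=\emptyset$ is trivial on both sides.

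Granting the claim, the hypothesis of \Cref{generowanie} follows at once: if $(x_n)_{n\in\omega}\in(\seqN)^\omega$ has a $\subseteq$-increasing subsequence $x_{i_0}\subseteq x_{i_1}\subseteq\cdots$, then $\varphi(x_{i_0})\subseteq\varphi(x_{i_1})\subseteq\cdots$ witnesses the same for $(\varphi(x_n))_{n\in\omega}$; conversely any $\subseteq$-increasing subsequence of $(\varphi(x_n))_{n\in\omega}$ pulls back through the equivalence to a $\subseteq$-increasing subsequence of $(x_n)_{n\in\omega}$. Thus \Cref{generowanie} applies and $L_{(\seq2,\subseteq)}$ is $\analytic$-complete. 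Alternatively, and equivalently, one may compose the map $f\colon\trees\to(\seqN)^\omega$ from the proof of \Cref{ciagi_nad_omega} with the continuous map $g\colon(\seqN)^\omega\to(\seq2)^\omega$, $g(x)(n)=\varphi(x_n)$, to obtain directly a Borel (indeed continuous) reduction of $\illfounded$ to $L_{(\seq2,\subseteq)}$.

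The only genuine content is the rigidity half of the claim, $\varphi(\sigma)\subseteq\varphi(\tau)\Rightarrow\sigma\subseteq\tau$: the encoding must be chosen so that a $\subseteq$-chain in the image cannot ``slip'' into the interior of a block and thereby originate from $\subseteq$-incomparable sequences in $\seqN$. The separator-symbol device above is exactly what prevents this, so once the encoding is fixed the verification is routine.
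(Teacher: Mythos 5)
Your proof is correct and follows essentially the same route as the paper: reduce to \Cref{ciagi_nad_omega} via \Cref{generowanie} using an embedding $\varphi\colon(\seqN,\subseteq)\to(\seq2,\subseteq)$ that preserves and reflects extension. The only difference is the concrete encoding (your unary blocks $1^{a_i}0$ versus the paper's doubled-binary blocks with $01$ separators), and your choice makes the rigidity verification a bit cleaner.
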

\begin{proof}
	We will use \Cref{ciagi_nad_omega,generowanie}. First, define function $f:\omega\rightarrow\seq2$ with formula
	$$f(n)=a_0a_0a_1a_1\ldots a_ma_m,$$
	where $n=(a_0a_1\ldots a_m)_2$, i.e. $a_0a_1\ldots a_m$ is a binary reprezentation of $n$. Now consider a function $\varphi: \seqN\rightarrow\seq2$ defined as:
	$$\varphi(b_0b_1\ldots b_n)= f(b_0)\string^01\string^f(b_1)\string^01\string^\ldots\string^01\string^f(b_n)\string^01.$$
	$\varphi$ and $\varphi^{-1}$ are both increasing (with respect to ordering defined by $\subseteq$), so $\varphi$ fulfills requirements of \Cref{generowanie}. Hence, the thesis holds.
\end{proof}

On $\seqN$ let us define an ordering $\RLleq$ with the formula
$$x\RLleq y\iff (\exists n\in\omega)(x=y\upharpoonright n \lor (x\upharpoonright n=y\upharpoonright n \land x(n)>y(n))).$$
Relation $\RLleq$ can be seen as the lexicographical order on $\seqN$ with modification, that order on $\omega$ is reversed.

\begin{theorem}
	\label{RLorder_theorem}
	 $L_{(\seqN,\RLleq)}$ is $\analytic$-complete.
\end{theorem}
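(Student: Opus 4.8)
The plan is to show that $\illfounded$ Borel‑reduces to $L_{(\seqN,\RLleq)}$, using \emph{the same} continuous map $f\colon\trees\to(\seqN)^\omega$ that was used in the proof of \Cref{ciagi_nad_omega}: fix the enumeration $\{\sigma_n:n\in\omega\}$ of $\seqN$ with $\sigma_n\subseteq\sigma_m\Rightarrow n\le m$ and set $f(T)(n)=\sigma_n$ if $\sigma_n\in T$ and $f(T)(n)=1^n0$ otherwise. Since $\RLleq$ is a countable relation, $L_{(\seqN,\RLleq)}$ is analytic, and $\illfounded$ is $\analytic$-complete, so by \Cref{analityczny_z_analitycznego} it suffices to check $f^{-1}\bigl[L_{(\seqN,\RLleq)}\bigr]=\illfounded$. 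One inclusion is immediate: $\tau\subseteq\tau'$ implies $\tau\RLleq\tau'$, so if $\beta\in[T]$ and $n_m$ is the index with $\sigma_{n_m}=\beta\upharpoonright m$, then $n_0<n_1<n_2<\cdots$ (by the choice of the enumeration, since the $\beta\upharpoonright m$ are pairwise distinct), all the $\sigma_{n_m}=\beta\upharpoonright m$ lie in $T$, and $\bigl(f(T)(n_m)\bigr)_{m}=\bigl(\beta\upharpoonright m\bigr)_m$ is $\RLleq$-increasing; hence $f(T)\in L_{(\seqN,\RLleq)}$.

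For the reverse inclusion, the crucial point — and the step I expect to be the main obstacle — is an analysis of infinite $\RLleq$-increasing sequences, which I would package as a lemma: \emph{if $\tau_0\RLless\tau_1\RLless\tau_2\RLless\cdots$ in $\seqN$, then for every $k$ the sequence $\langle\tau_i\upharpoonright k:i\in\omega\rangle$ is eventually constant; writing $\beta_k$ for its eventual value one has $\beta_0\subsetneq\beta_1\subsetneq\cdots$, so $\beta:=\bigcup_k\beta_k\in\baire$, and $\beta\upharpoonright k\subseteq\tau_i$ for all sufficiently large $i$.} The proof is an induction on $k$: once $\tau_i\upharpoonright k=\beta_k$ for $i\ge I$, strictness of the sequence forces $|\tau_i|>k$ for all but at most one such $i$ (a term of length $k$ equals $\beta_k$, and the $\tau_i$ are distinct), so from some point on $\tau_i(k)$ and $\tau_{i+1}(k)$ are both defined; unwinding the definition of $\tau_i\RLless\tau_{i+1}$ shows $\tau_i(k)\ge\tau_{i+1}(k)$, with equality unless $\tau_i$ and $\tau_{i+1}$ first disagree exactly at coordinate $k$, and the latter cannot happen infinitely often because it strictly decreases the natural number $\tau_i(k)$. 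Thus $\tau_i(k)$ is eventually some constant $c$ and $\beta_{k+1}:=\beta_k\frown(c)$ works; pinning down this statement and the bookkeeping in the inductive step is the technical heart of the argument.

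It remains to run the reduction. Assume $f(T)\in L_{(\seqN,\RLleq)}$. Every value occurs in $f(T)$ only finitely often (a value $v$ can appear only at the unique index $n$ with $\sigma_n=v$, or at the unique index $m$ with $1^m0=v$), so, as $\RLleq$ is a linear order, the witnessing increasing subsequence takes infinitely many distinct values and can be thinned to a \emph{strictly} increasing one $\tau_0\RLless\tau_1\RLless\cdots$ in $f(T)$; let $\beta\in\baire$ be its coordinatewise limit from the lemma. Each $\tau_i$ is a node of $T$ or a filler $1^{m_i}0$. If infinitely many $\tau_i$ lie in $T$, then for every $k$ some such $\tau_i$ satisfies $\beta\upharpoonright k\subseteq\tau_i\in T$, whence $\beta\upharpoonright k\in T$ by downward closure; thus $\beta\in[T]$ and $T\in\illfounded$. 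If instead only finitely many $\tau_i$ lie in $T$, then cofinitely many are fillers $1^{m_i}0$ with the $m_i$ pairwise distinct; but $1^a0\RLless 1^b0$ forces $a>b$ (at their first disagreement the shorter one carries $0$ while the longer carries $1$), so we would obtain $m_j>m_{j+1}>m_{j+2}>\cdots$, an impossible infinite strictly decreasing sequence of naturals. Hence $f(T)\in L_{(\seqN,\RLleq)}$ implies $T\in\illfounded$, so $f^{-1}\bigl[L_{(\seqN,\RLleq)}\bigr]=\illfounded$ and $L_{(\seqN,\RLleq)}$ is $\analytic$-complete.
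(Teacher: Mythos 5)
Your proof is correct, and it uses the same reduction as the paper: the identical map $f\colon\trees\to(\seqN)^\omega$ from the proof of \Cref{ciagi_nad_omega}, with the same easy forward direction via $\subseteq$-chains being $\RLleq$-chains. Where you diverge is in the key step of the converse. The paper extracts a branch by repeated pigeonhole: since $a_{i_0}$ is $\RLleq$-minimal, $a_{i_0}(0)\ge a_{i_j}(0)$ for all $j$, so infinitely many terms share a first coordinate, and so on. You instead prove the stronger stabilization lemma that for each $k$ the restrictions $\tau_i\upharpoonright k$ are \emph{eventually constant}, because $\tau_i(k)\ge\tau_{i+1}(k)$ makes each coordinate a non-increasing sequence of naturals; this yields that \emph{cofinitely} many terms extend $\beta\upharpoonright k$, which makes the final case split (infinitely many $\tau_i$ in $T$ versus cofinitely many fillers) airtight. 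Your version also repairs two small slips that the paper glosses over: you justify passing from the non-strict $\RLleq$-increasing witness to a strictly increasing one (each value occurs at most finitely often in $f(T)$ and $\RLleq$ is linear, so thinning works), and you handle the fillers correctly --- since $1^a0\RLless 1^b0$ forces $a>b$, a strictly increasing sequence can contain finitely many fillers, not ``at most one'' as the paper asserts, and it is the impossibility of an infinite decreasing sequence of naturals (exactly your argument) that rules out cofinitely many of them. Both routes are sound; yours is slightly longer but more self-contained and more careful.
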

\begin{proof}
	We will construct a continuous function $f:\trees\rightarrow(\seqN)^\omega$ such that $f^{-1}[L_X]=\illfounded$. First, fix an enumeration $\{\sigma_n: n\in\omega\}$ of $\seqN$ like in proof of \Cref{ciagi_nad_omega}. Now we can define function $f$:
	$$f(T)(n)=\left\{\begin{array}{ll}
		\sigma_n, & \sigma_n\in T \\
		1^n0, & \sigma_n\notin T
	\end{array}\right. .$$
	Clearly, if $T\in \textrm{IF}_\omega$, then $f(T)$ contains $\RLleq$-increasing subsequence, hence $f(T)\in L_X$. Let $a\in L_X$, $a_{i_0}\RLless a_{i_1}\RLless a_{i_2}\RLless\ldots$, $i_0<i_1<i_2<\ldots$. Take any $T\in f^{-1}(a)$. Notice that at most one of $a_{i_0},a_{i_1},\ldots$ can be of the form $1^n0$ for some $n\in\omega$ (as $0\RLmore10\RLmore110\RLmore\ldots$), so without loss of generality all of them are elements of $T$ and $|a_{i_0}|>0$.
	
	Since $a_{i_0}$ is $\RLleq$-smallest of $a_{i_0},a_{i_1},\ldots$, it must be the case that $a_{i_0}(0)\geq a_{i_j}(0)$ for all $j\in\omega$. Therefore there are only finitely many possible values for $a_{i_j}(0)$, so infinitely many of them start with the same number, say $\tau(0)$. Analogically, from all $a_{i_j}$ which start with $\tau(0)$ infinitely many have the same number at position $1$, say $\tau(1)$. Continuing this way we obtain $\tau\in\baire$ such that for every $n\in\omega$ there is $j\in\omega$ satisfying
	$$\tau\upharpoonright n\preceq a_{i_j},$$
	so (because $T$ is a tree and $a_{i_j}\in T$) $\tau\upharpoonright n\in T$. It follows that $\tau$ is an infinite branch of $T$.
\end{proof}

Now let us focus on rational numbers with standard ordering. Notice that this poset can be seen as ''the most complicated'' among countable linear orderings, since it contains an isomorphic copy of any countable linear order. Firstly, we shall see that $(\mathbb{Q}, \leq)$ generates $\analytic$-complete set, opposed to linear orderings investigated in \Cref{basics}.
\begin{theorem}\label{wymierne}
	The set  $L_{(\mathbb{Q}, \leq)}$ is $\analytic$-complete.
\end{theorem}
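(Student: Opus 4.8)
The plan is to reduce from one of the already-established $\analytic$-complete sets, most naturally $L_{(\seqN,\RLleq)}$ from \Cref{RLorder_theorem}, via \Cref{generowanie}. What we need is an order-embedding-like map $\varphi:\seqN\to\mathbb{Q}$ with the property that a sequence $(x_n)$ in $\seqN$ has a $\RLleq$-increasing subsequence if and only if $(\varphi(x_n))$ has a $\leq$-increasing subsequence in $\mathbb{Q}$. The key observation is that $(\seqN,\RLleq)$ is a countable linear order, so it embeds into $(\mathbb{Q},\leq)$; and since $\varphi$ is then strictly monotone, $x\RLleq y$ iff $\varphi(x)\leq\varphi(y)$, which immediately gives the biconditional on subsequences required by \Cref{generowanie}. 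So at the level of posets the statement is almost a corollary of \Cref{RLorder_theorem} together with the universality of $\mathbb{Q}$ among countable linear orders — one just has to be a little careful that \Cref{generowanie} is stated for posets (reflexive, antisymmetric, transitive relations) and that $\RLleq$ and $\leq$ on $\mathbb{Q}$ are indeed such; a strictly monotone injection between linear orders preserves and reflects the relation, so the hypothesis of \Cref{generowanie} holds.

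Concretely, I would first check that $\RLleq$ is a linear order on $\seqN$: reflexivity and totality are clear from the definition, and transitivity/antisymmetry follow from the corresponding facts for the reversed lexicographic order. Then, since $\seqN$ is countable and $(\mathbb{Q},\leq)$ is a countable dense linear order without endpoints, Cantor's back-and-forth theorem yields a strictly order-preserving injection $\varphi:(\seqN,\RLleq)\hookrightarrow(\mathbb{Q},\leq)$ (one does not even need surjectivity — a plain order-embedding suffices, built by a one-sided forth argument). Next I would verify the hypothesis of \Cref{generowanie}: if $(x_n)$ has a $\RLleq$-increasing subsequence $x_{k_0}\RLleq x_{k_1}\RLleq\cdots$ then applying $\varphi$ gives $\varphi(x_{k_0})\leq\varphi(x_{k_1})\leq\cdots$, and conversely if $(\varphi(x_n))$ has a $\leq$-increasing subsequence then, since $\varphi$ reflects the order, the same indices give a $\RLleq$-increasing subsequence of $(x_n)$. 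Finally, invoke \Cref{generowanie} with $(X,\leq_X)=(\seqN,\RLleq)$, $(Y,\leq_Y)=(\mathbb{Q},\leq)$ and this $\varphi$ to conclude that $L_{(\mathbb{Q},\leq)}$ is $\analytic$-complete.

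The only real subtlety — and the step I would treat most carefully — is making sure \Cref{generowanie} genuinely applies: its statement presupposes both structures are posets and uses the word ``increasing subsequence'' in the weak ($\leq$) sense, so I should confirm that a strict order-embedding between \emph{linear} orders still satisfies the displayed biconditional (it does, since for linear orders ``contains a $\leq$-increasing subsequence'' is equivalent to ``contains a non-eventually-strictly-decreasing pattern'', a condition transported faithfully by any injective monotone map). Alternatively, and perhaps more cleanly, one can bypass \Cref{generowanie} entirely and use \Cref{izomorfizm_analityczny}: $(\seqN,\RLleq)$ is a countable linear order, hence isomorphic to a suborder of $(\mathbb{Q},\leq)$, so by \Cref{podzbior_analityczny} (with $X$ the image of that embedding and $Y=\mathbb{Q}$, $S={\leq}$, $R=S\cap(X\times X)$) we get that $L_{(\mathbb{Q},\leq)}$ is $\analytic$-complete directly. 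I expect the write-up to be short; no genuine obstacle is anticipated beyond bookkeeping about which corollary to cite.
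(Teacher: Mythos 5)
Your proposal is correct and follows essentially the same route as the paper: both reduce from $L_{(\seqN,\RLleq)}$ (Theorem \ref{RLorder_theorem}) by embedding $(\seqN,\RLleq)$ order-isomorphically onto a subset of $(\mathbb{Q},\leq)$ and then invoking \Cref{podzbior_analityczny} and \Cref{izomorfizm_analityczny} (equivalently, \Cref{generowanie}). The only difference is cosmetic: the paper writes down an explicit binary-expansion embedding $\varphi(a_0\ldots a_n)=(0.0^{a_0}10^{a_1}1\ldots0^{a_n}1)_2$, whereas you obtain the embedding abstractly from the universality of $\mathbb{Q}$ among countable linear orders, which is equally valid.
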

\begin{proof}
    Define a function $\varphi:\seqN\rightarrow\mathbb{Q}$ with formula ($\varphi(\varepsilon)=0$)
    $$\varphi(a_0a_1a_2\ldots a_n)=(0.\underbrace{00\ldots0}_{a_0}1\underbrace{00\ldots0}_{a_1}1\underbrace{00\ldots0}_{a_2}1\ldots\underbrace{00\ldots0}_{a_n}1)_2.$$
    Considering $\RLleq$ on $\seqN$, $\varphi$ and $\varphi^{-1}$ are clearly increasing. Therefore $\varphi$ is an order isomorphism between $(\seqN, \RLleq)$ and $(\varphi(\seqN), \leq)$. Thus, the thesis follows from \Cref{podzbior_analityczny,izomorfizm_analityczny}.
\end{proof}

Next, we would like to characterize all linear ordering which yields an $\analytic$-complete set. The following theorem, as explained later, will serve as a main tool in our task.

\begin{theorem}\label{charakteryzacja}
	Suppose $X\subseteq\mathbb{Q}\cap[0,1]$, $\leq_X=\leq\cap (X\times X)$. Let $\overline{X}$ be the closure of $X$ in the Euclidean topology. We have two possible cases.
	\begin{enumerate}
		\item If $|\overline{X}|=\omega$, then $L_{(X,\leq_X)}$ is Borel.
		\item If $|\overline{X}|=\mathfrak{c}$, then $X$ contains $\leq$-dense subset.
	\end{enumerate}
	
\end{theorem}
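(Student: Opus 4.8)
The plan is to analyze the Cantor–Bendixson structure of the compact set $\overline{X}$. Since $\overline{X}\subseteq[0,1]$ is a closed subset of a Polish space, it is itself Polish, so by the Cantor–Bendixson theorem it decomposes as $\overline{X}=P\cup C$ where $P$ is the perfect kernel and $C$ is countable (the scattered part). This dichotomy is exactly the split into the two cases: $P=\emptyset$ iff $\overline{X}$ is countable (case 1), and $P\neq\emptyset$ iff $\overline{X}$ is perfect on a nonempty piece, which forces $|\overline{X}|=\mathfrak{c}$ (case 2) since every nonempty perfect Polish space has size continuum and $|\overline{X}|\le|\mathbb{R}|=\mathfrak{c}$.

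\emph{Case 1: $|\overline{X}|=\omega$.} Here I would exploit that a countable closed linear order is \emph{scattered} — it contains no copy of $\mathbb{Q}$ — and moreover, being compact and countable, it is well-understood: by the Cantor–Bendixson analysis it has finite Cantor–Bendixson rank $\alpha$, and $\overline{X}$ (hence $X$) is order-isomorphic to a countable ordinal $<\omega^{\alpha+1}$ or a finite sum of such. The key point is that in a scattered linear order, the property ``contains an infinite increasing subsequence'' becomes Borel: one proceeds by induction on the Cantor–Bendixson rank. The base case ($\overline{X}$ finite, rank $0$) gives $L_{(X,\le_X)}=X^\omega$ by the Fact on finite sets. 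For the inductive step, isolated points of $\overline{X}^{(n)}$ relative to $\overline{X}^{(n)}$ partition the problem: a sequence $y\in X^\omega$ has an increasing subsequence iff either it has one lying ``eventually below'' some fixed point of the derived set (a countable union of conditions reducible to a lower-rank instance), or it has an increasing subsequence that ``escapes upward through'' the finitely-many limit points of each level — and escaping upward past a limit point is a $G_\delta$-type ``unboundedly often near the limit'' condition exactly as in the $L_{(\mathbb{Z},<)}$ computation. Carefully bookkeeping these finitely-many levels and the countably-many isolated points, one writes $L_{(X,\le_X)}$ in the Borel hierarchy, with the rank of the formula controlled by the Cantor–Bendixson rank of $\overline{X}$.

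\emph{Case 2: $|\overline{X}|=\mathfrak{c}$.} Then the perfect kernel $P\subseteq\overline{X}$ is nonempty. I claim $X$ contains a $\le$-dense subset. Pick any open interval $(p,q)$ with $p,q\in P$ and $p<q$; then $(p,q)\cap P$ is nonempty (since $P$ is perfect, hence has no isolated points and is dense-in-itself, so between two of its points there are more of its points — actually one should pick $p,q$ so that $(p,q)\cap P\ne\emptyset$, which is possible precisely because $P$ has no isolated points). Inside this interval, $P$ is a nonempty perfect set, and $X$ is dense in $\overline{X}\supseteq P$, so $X\cap(p,q)$ is dense in the perfect set $P\cap[p,q]$, which is order-isomorphic (being a perfect closed subset of $\mathbb{R}$ with endpoints, or after removing endpoints) to something containing a copy of $\mathbb{Q}$; more directly, $X\cap(p,q)$ is itself a countable dense-in-itself linear order without endpoints (density-in-itself transfers from $P$), hence order-isomorphic to $\mathbb{Q}$ by Cantor's theorem. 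That isomorphic copy of $\mathbb{Q}$ inside $X$ is the desired $\le$-dense subset. The main obstacle is the first case: making the inductive Borel-complexity bookkeeping over the Cantor–Bendixson levels fully rigorous, since one must simultaneously track the finitely many limit points at each of finitely many levels and verify the formula genuinely lands in a fixed finite Borel class; the second case is essentially a direct application of Cantor–Bendixson plus Cantor's back-and-forth theorem.
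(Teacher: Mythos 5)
Your proposal has genuine gaps in both cases. For case (1), the premise that a countable compact subset of $[0,1]$ has \emph{finite} Cantor--Bendixson rank is false: for every countable ordinal $\alpha$ there is a countable compact set of order type $\omega^{\alpha}+1$, whose rank is roughly $\alpha$. So the induction you sketch would have to be transfinite, the Borel class would climb unboundedly through the hierarchy rather than landing in a ``fixed finite Borel class,'' and the inductive step itself (the ``bookkeeping'' you flag as the main obstacle) is never actually carried out. The paper avoids all of this with a one-line idea you missed: any non-decreasing subsequence of $y$ converges to some $g\in\overline{X}$, so $L_{(X,\leq_X)}=\bigcup_{g\in\overline{X}}L_g$, where $L_g$ is the (manifestly Borel) set of $y$ that, for every $a\in X$ with $a<g$, satisfy $a<y_n\leq g$ infinitely often; since $\overline{X}$ is countable this is a countable union of Borel sets. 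No rank induction is needed.

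For case (2), the step ``$X\cap(p,q)$ is a countable dense-in-itself linear order, hence order-isomorphic to $\mathbb{Q}$ by Cantor's theorem'' conflates topological density-in-itself with order-density, and both halves of the claim fail. First, take $X$ to be the set of endpoints of the complementary intervals of the Cantor set: $X$ is dense-in-itself and dense in the Cantor set, yet the two endpoints of each removed interval are order-consecutive in $X$, so $X$ is not $\leq$-dense and Cantor's theorem does not apply; one must discard one endpoint from each such consecutive pair, which is exactly the paper's construction $P=X\backslash\{p_\sigma: l_{\sigma+1}\in X\}$ and the accompanying case analysis. Second, $X$ need not contain \emph{any} dense-in-itself subset even when $|\overline{X}|=\mathfrak{c}$: take $X$ to be the set of midpoints of the complementary intervals of the Cantor set, which is discrete but has uncountable closure. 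Your argument says nothing about this situation, whereas the paper devotes a separate subcase to it (showing $\overline{X\backslash C}\supseteq C$ and extracting a $\leq$-dense selector from the traces $X\cap U_\sigma$). The conclusion of case (2) is true, but establishing it requires precisely the combinatorial work on the gaps of the perfect set that your proposal skips.
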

\begin{proof}
	Firstly, consider the case $|\overline{X}|=\omega$. For $g\in[0,1]$ define
	$$L_g=\{y\in X^\omega: (\forall a\in X)((a<g)\rightarrow (\forall N\in\omega)(\exists n>N)(a<y_n\leq g))\}.$$
	We want to show that
	$$L_{(X,\leq_X)}=\bigcup\{L_g:\, g\in\overline{X}\}.$$
	
	Take any $g\in[0,1]$, for which $L_g\ne\emptyset$, and $y\in L_g$. Let $N_0=\max\{n: y_n=g\}$ (if such $n$ does not exist, $y$ contains a constant subsequence, hence $y\in L_{(X,\leq_X)}$) and take $k_0=N_0+1$. From the definition of $L_g$ there is $k_1>k_0$ such that $y_{k_0}<y_{k_1}<g$ (because $y_{k_0}\in X$, $y_{k_0}<g$). Analogically we can find $k_2>k_1$ satisfying $y_{k_1}<y_{k_2}<g$. Continuing this way we obtain a sequence $k_0<k_1<k_2,\ldots$ defining an increasing subsequence $(y_{k_i})_{i\in\omega}$ of $y$.
	
	On the other hand, when $y\in L_{(X,\leq_X)}$, it contains a non-decreasing subsequence. But this subsequence is bounded (like the whole $X\subseteq[0,1]$), so it converges to some $g\in\overline{X}$. Thus, $y\in L_g$.
	
	Note that, since $X$ is countable, $L_g$ is a Borel set for every $g\in[0,1]$. So $L_{(X,\leq_X)}$, as a countable union of $L_g$'s, is also Borel.\\
	
	Now let us focus on the second case, i.e. $|\overline{X}|=\mathfrak{c}$. First, observe that if $[a,b]\in\overline{X}$ for some $a,b\in[0,1]$, $a<b$, then $X\cap[a,b]$ is $\leq$-dense. Thus, assume that $\overline{X}$ does not contain an interval. There is a perfect nowhere dense set $C\subseteq\overline{X}$. Without loss of generality we can presume that $0,1\in C$ (otherwise we consider interval $[a,b]$, where $a=\min C$, $b=\max C$). We will represent $C$ i more convenient way. To do this we will inductively construct a family $\{C_\sigma:\, \sigma\in \seq2\}$ of closed intervals and a family $\{U_\sigma:\, \sigma\in\seq2\}$ of open intervals (similarly to the  classical construction of the Cantor set).
 
  We start with $C_\varepsilon=[0,1]$. Since $C$ is nowhere dense, we can take a maximal open interval $U_\varepsilon=(a_\varepsilon,b_\varepsilon)$, $U_\varepsilon\subseteq[0,1]$, disjoint with $C$. Hence, $C\subseteq[0,a_\varepsilon]\cup[b_\varepsilon,1]$. Next we see that $a_\varepsilon\ne0$ (and $b_\varepsilon\ne1$), because otherwise $0\in C$ would be an isolated point of perfect set $C$. Moreover, from maximality of $U_\varepsilon$, $a_\varepsilon,b_\varepsilon\in C$. Let us denote $[0,a_\varepsilon]=C_{(0)}$, $[b_\varepsilon,1]=C_{(1)}$, $l_{(0)}=0$, $p_{(0)}=a_\varepsilon$, $l_{(1)}=b_\varepsilon$, $p_{(1)}=1$.
  
  Assume now that  $C_\sigma=[l_\sigma,p_\sigma]$ has been already constructed for some  $\sigma\in\seq2$. Analogically as in the previous point we choose a maximal open interval $U_\sigma=(a_\sigma,b_\sigma)\subseteq[l_\sigma,p_\sigma]$ disjoint with $C$. We denote $l_{\sigma\string^0}=l_\sigma$, $p_{\sigma\string^0}=a_\sigma$, $l_{\sigma\string^1}=b_\sigma$, $p_{\sigma\string^1}=p_\sigma$.

	Taking $$C_n=\bigcup\{C_\sigma:\, \sigma\in 2^{<\omega} \\ |\sigma|=n\}$$
	it is clear that $C=\bigcap_{n\in\omega}C_n$.
 
	Therefore, if we put $\mathcal{U}=\{U_\sigma: \sigma\in2^{<\omega}\}$,
	\begin{equation}\label{rownanie_na_C}
	    C=[0,1]\backslash\bigcup\mathcal{U}.
	\end{equation}
	Furthermore
	\begin{equation}\label{konce}
		\{l_\sigma: \sigma\in\seq2\}\cup\{p_\sigma: \sigma\in\seq2\}\subseteq C.
	\end{equation}
We will now consider two possibilities. First, $X$ contains dense-in-itself set and second, $X$ does not contain dense-in-itself set.  
    
    In the first situation, $X$ contains a dense-in-itself set. Without loss of generality $X$ is dense-in-itself (otherwise we repeat above construction for closure of this dense-in-itself subset of $X$). $\overline{X}$ is then a perfect set and does not contain an interval, thus is nowhere dense. Hence, we can put $C=\overline{X}$. Consider a set
    $$P=X\backslash\{p_\sigma: l_{\sigma+1}\in X\},$$
    where $\sigma+1$ is a successor of $\sigma\in 2^n$ in lexicographical order on $2^n$ (in other words binary adding $1$ to $\sigma$ and $111\ldots11+1$ does not exist). We claim that $P$ is $\leq$-dense. Take any $a,b\in P$, $a<b$. From the above construction there is $\sigma\in\seq2$ such that
    $$a\leq p_\sigma<l_{\sigma+1}\leq b.$$
        
    First, assume that $a<p_\sigma$ and $l_{\sigma+1}<b$. If $p_\sigma\in X$ or $l_{\sigma+1}\in X$, claim clearly holds. Otherwise $p_\sigma\in\overline{X}$, so there is $x\in X$ close to $p_\sigma$. Therefore
        $$a<x<b.$$
        
    Second, presume that $a=p_\sigma$. From definition of $P$, $l_{\sigma+1}<b$. Since $X$ is dense-in-itself, there is $x\in X$ satisfying $l_{\sigma+1}<x<b$. If $x\in P$, claim holds. If not, $x=p_\tau<l_{\tau+1}\leq b$ for some $\tau\in\seq2$. Again, there is $y\in X$ such that $a<y<x$. When $y\in P$, claim holds. Otherwise $y=p_\psi$ and $l_{\psi+1}\in P$ for some $\psi\in\seq2$. But then
        $$a<p_\psi<\underbrace{l_{\psi+1}}_{\in P}<x<b.$$
        
    The case $a<p_\sigma$, $l_{\sigma+1}=b$ is analogous to previous one.
    
    Finally, consider a situation when $X$ does not contain any dense-in-itself set. We start by proving that
    \begin{equation}\label{zycie_w_dziurach}
        \overline{X\backslash C}\supseteq C.
    \end{equation}
    Suppose not, so there is $z\in C$ such that $z\notin\overline{X\backslash C}$. There exists an open interval $U=(l,p)\ni z$ disjoint with $X\backslash C$. As $z\in C\subseteq\overline{X}$, $X\cap U\ne\emptyset$ and $X\cap U\subseteq C\cap U$. We claim that $X\cap U$ is dense-in-itself. Take any $x\in X\cap U$ and $\varepsilon>0$. We want to find $y\in(x-\varepsilon, x+\varepsilon)\cap U\cap X$. Since $x\in C\cap U$, it exists $c\in(x-\varepsilon, x+\varepsilon)\cap U\cap C$. Because $c\in C$, we can find $y\in X$ close to $c$, especially $y\in(x-\varepsilon, x+\varepsilon)\cap U\cap X$. Therefore $X\cap U$ is dense-in-itself, which contradicts assumption that $X$ does not contain such a set.
    
    From \cref{rownanie_na_C} we see that
    $$X\backslash C=X\cap\bigcup\mathcal{U}=\bigcup\{X\cap U_\sigma: U_\sigma\in\mathcal{U}\}.$$
    Let $Y$ be a selector of family $\{X\cap U_\sigma: U_\sigma\in\mathcal{U}\}\backslash\{\emptyset\}$. We claim that $Y$ is $\leq$-dense. Take any $a,b\in Y$, $a<b$. Take $\sigma,\psi\in\seq2$, $\sigma\ne\psi$, such that $a\in U_\sigma$, $b\in U_\psi$. There is $\tau\in\seq2$ satisfying $C_\tau=[l_\tau,p_\tau]\subseteq[l_{\sigma\string^1}, p_{\psi\string^0}]$ and $C_\tau\ne[l_{\sigma\string^1}, p_{\psi\string^0}]$. Suppose that $l_\tau\ne l_{\sigma\string^1}$ (case when $p_\tau\ne p_{\psi\string^0}$ is analogous). From \ref{konce} it follows that $l_\tau\in C$, so (from \ref{zycie_w_dziurach}) $l_\tau\in\overline{X\backslash C}$. Thus, there is a sequence from $X\cap\bigcup\mathcal{U}$ convergent to $l_\tau$. Hence, there is $\phi\in\seq2$ satisfying
    $$U_\phi\subseteq[l_{\sigma\string^1}, p_{\psi\string^0}], X\cap U_\phi\ne\emptyset.$$
    Therefore there exists $x\in Y\cap X\cap U_\phi$. Clearly,
    $a<x<b.$
\end{proof}

\begin{theorem}\label{linki}
    Let $(X,\leq_X)$ be a linear order. $L_{(X,\leq_X)}$ is $\analytic$-complete if and only if $X$ contains $\leq_X$-dense subset.
\end{theorem}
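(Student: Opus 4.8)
The plan is to prove the two implications separately, leaning on \Cref{charakteryzacja} as the main ingredient for the harder (forward) direction, which I would argue contrapositively.

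For the implication ``$X$ has a $\leq_X$-dense subset $\Rightarrow L_{(X,\leq_X)}$ is $\analytic$-complete'', let $D\subseteq X$ be such a subset; since a one-element or empty subset is only vacuously dense, we may assume $D$ has at least two elements, so $D$ is infinite. Fixing $a<_X b$ in $D$, the set $(a,b)_D=\{c\in D: a<_X c<_X b\}$ is nonempty by density, is countable, and is a dense linear order without endpoints, hence order-isomorphic to $(\mathbb{Q},\leq)$ by Cantor's theorem. Now I chain the earlier results: $L_{(\mathbb{Q},\leq)}$ is $\analytic$-complete by \Cref{wymierne}; therefore $L_{((a,b)_D,\,\leq_X\upharpoonright(a,b)_D)}$ is $\analytic$-complete by \Cref{izomorfizm_analityczny}; and therefore $L_{(X,\leq_X)}$ is $\analytic$-complete by \Cref{podzbior_analityczny}, applied to the inclusion $(a,b)_D\subseteq X$ with $S=\leq_X$. (Analyticity of $L_{(X,\leq_X)}$ is automatic, as $X$ is countable and hence $\leq_X$ is a countable relation.)

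For the reverse implication I assume $X$ contains no $\leq_X$-dense subset and show $L_{(X,\leq_X)}$ is Borel; since no $\analytic$-complete set is Borel, this finishes the proof. If $X$ is finite then $L_{(X,\leq_X)}=X^\omega$ is clopen, so assume $X$ is infinite. By Cantor's theorem that $(\mathbb{Q},\leq)$ is universal among countable linear orders, fix an order embedding of $X$ into $\mathbb{Q}\cap[0,1]$ and let $X'$ be its image, with $\leq_{X'}=\leq\cap(X'\times X')$; then $(X',\leq_{X'})\cong(X,\leq_X)$, so by \Cref{izomorfizm_analityczny} it suffices to show $L_{(X',\leq_{X'})}$ is Borel, and note that $X'$ inherits the property of having no $\leq_{X'}$-dense subset. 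The Euclidean closure $\overline{X'}$ is an infinite closed subset of $[0,1]$, so it is countable or of cardinality $\mathfrak{c}$ (perfect set property for closed sets). The second case is impossible: by \Cref{charakteryzacja}(2) it would produce a $\leq$-dense subset of $X'$. Hence $|\overline{X'}|=\omega$, and \Cref{charakteryzacja}(1) gives that $L_{(X',\leq_{X'})}$ is Borel.

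The substantive work is entirely inside \Cref{charakteryzacja}, which I treat here as a black box; what remains --- the reduction to a subset of $\mathbb{Q}\cap[0,1]$ via universality of $\mathbb{Q}$, and the appeal to the perfect set property to force one of the two cases of that theorem --- is bookkeeping. The only step deserving a little care is extracting a copy of $(\mathbb{Q},\leq)$ from an arbitrary countable dense subset, where one must pass to an open interval to strip away possible endpoints before invoking Cantor's theorem, but this is routine. In fact the argument establishes the slightly stronger statement that, for $X\subseteq\mathbb{Q}\cap[0,1]$, the three conditions ``$L_{(X,\leq_X)}$ is $\analytic$-complete'', ``$|\overline{X}|=\mathfrak{c}$'', and ``$X$ contains a $\leq$-dense subset'' are all equivalent.
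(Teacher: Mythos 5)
Your proof is correct and follows essentially the same route as the paper: reduce to $X\subseteq\mathbb{Q}\cap[0,1]$ via universality of $\mathbb{Q}$, get the forward direction from \Cref{charakteryzacja} (contrapositively, using the countable-or-continuum dichotomy for closed sets), and get the reverse direction by locating a copy of $(\mathbb{Q},\leq)$ inside the dense subset and invoking \Cref{wymierne}, \Cref{izomorfizm_analityczny} and \Cref{podzbior_analityczny}. You are in fact slightly more careful than the paper in stripping endpoints before applying Cantor's theorem and in treating finite $X$ separately.
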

\begin{proof}
    First, note that every linear order can be embedded into $(\mathbb{Q}\cap[0,1], \leq)$. Therefore, we can assume without loss of generality that $X\subseteq\mathbb{Q}\cap[0,1]$, $\leq_X=\leq$.
    
    Suppose that $L_{(X,\leq_X)}$ is $\analytic$-complete. From \Cref{charakteryzacja}, $X$ contains $\leq$-dense subset. On the other hand, if $X$ contains $\leq$-dense subset $Y$, $Y$ is order-isomorphic to $\mathbb{Q}$ (since $\mathbb{Q}$ is the only, up to isomorphism, countable dense linear order). From \Cref{podzbior_analityczny,izomorfizm_analityczny}, $L_{(X,\leq_X)}$ is $\analytic$-complete.
\end{proof}
\begin{corollary}
    Let $\lexq$ be the lexicographical order on $\seq2$. Then $L_{(\seq2,\lexq)}$ is $\analytic$-complete.
\end{corollary}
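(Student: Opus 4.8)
The plan is to read this off \Cref{linki}: once we exhibit a $\lexq$-dense subset of $\seq2$, the corollary follows immediately. Note first that $\seq2$ itself is not $\lexq$-dense — with the convention, inherited from the definition of $\RLleq$, that a proper prefix is $\lexq$-below its extensions, the sequence $\sigma^\frown 0$ is an immediate $\lexq$-successor of $\sigma$ — so one must pass to a suitable subset.

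I would take
$$D=\{\sigma\in\seq2:\ \sigma\neq\varepsilon\ \text{and the last term of }\sigma\ \text{equals }1\}$$
together with the map $\psi\colon D\to\mathbb{Q}\cap(0,1)$ given by $\psi(a_0a_1\ldots a_{n-1})=(0.a_0a_1\ldots a_{n-1})_2=\sum_{i<n}a_i2^{-(i+1)}$. The main step is to check that $\psi$ is an order isomorphism of $(D,\lexq)$ onto the set of dyadic rationals in $(0,1)$ (with the usual order). It is a bijection because each dyadic rational in $(0,1)$ has exactly one finite binary expansion ending in $1$. It is order preserving by the standard comparison of binary expansions: when $\sigma\subsetneq\tau$ the extra digits of $\tau$ contribute a strictly positive amount, since $\tau$ also ends in $1$; and when $\sigma$ and $\tau$ first differ at some position $i$, the digit $0$ versus $1$ at position $i$ outweighs the at most $2^{-(i+1)}$ contributed by all later digits, the inequality being strict again because the sequences are finite.

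Since the dyadic rationals in $(0,1)$ form a countable dense linear order without endpoints, this shows $D$ is $\lexq$-dense: between any two elements of $D$ lies a dyadic rational of $(0,1)$, hence an element of $D$. Applying \Cref{linki} to $(\seq2,\lexq)$, which contains the $\lexq$-dense subset $D$, we conclude that $L_{(\seq2,\lexq)}$ is $\analytic$-complete. The only point requiring care is the strictness in the monotonicity of $\psi$, where the non-uniqueness of binary expansions (for instance $0.\rho1$ versus $0.\rho0111\ldots$) could cause trouble; restricting to sequences ending in $1$ is exactly what rules this out.
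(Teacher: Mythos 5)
Your proposal is correct and takes essentially the same route as the paper: both exhibit the set of finite $0$--$1$ sequences ending in $1$ as a $\lexq$-dense subset of $\seq2$ and then invoke \Cref{linki}. The only difference is cosmetic — the paper verifies density by writing down explicit intermediate sequences ($\sigma\string^1\lex\sigma\string^10^{k}1\lex\tau\string^1$ in the prefix case, $\sigma\string^1\lex\sigma\string^11\lex\tau\string^1$ otherwise), whereas you route the verification through the order isomorphism with the dyadic rationals of $(0,1)$, correctly handling the strict-monotonicity issue by restricting to expansions ending in $1$.
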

\begin{proof}
    Consider set $A=\{x\string^1: x\in\seq2\}$, i.e. the set of all sequences ending with $1$. We claim that this set is $\lexq$-dense. Take any $\sigma\string^1\lex\tau\string^1$, $\sigma,\tau\in\seq2$.
        
    If $\sigma\string^1\subseteq\tau\string^1$, then
        $$\sigma\string^1\lex\sigma\string^10^{|\tau|-|\sigma|}1\lex\tau\string^1.$$
        
    Otherwise
        $$\sigma\string^1\lex\sigma\string^11\lex\tau\string^1.$$
    Hence, by \Cref{linki}, $L_{(\seq2,\lexq)}$ is $\analytic$-complete.
\end{proof}

\begin{question}
    What is the characterisation of countable posets $(X,\le_X)$ such that $L_{(X,\le_X)}$ is $\analytic$-complete?
\end{question}

\end{document}